\newtheorem{theorem}{Theorem}
\newtheorem{lemma}{Lemma}
\theoremstyle{definition}
\newtheorem{definition}{Definition}
\theoremstyle{remark}
\newtheorem{remark}{Remark}
\DeclareMathOperator{\size}{size}
\DeclareMathOperator{\interior}{int}
\DeclareMathOperator{\cinterior}{\overline{int}}
\DeclareMathOperator{\conv}{conv}
\begin{document}

\begin{frontmatter}
\journal{}

\title{Uniqueness of Maximal Inscribed Parabolas and Minimal Circumscribing Horocycles}
\author{Martin Lukarevski}
\ead{martin.lukarevski@ugd.edu.mk}
\address{Goce Delčev University of Štip, Department of Mathematics and Statistics, Štip, North Macedonia}

\author{Hans-Peter Schröcker}
\ead{hans-peter.schroecker@uibk.ac.at}
\address{University of Innsbruck, Department of Basic Sciences in Engineering Sciences, Innsbruck, Austria}

\begin{abstract}
  We prove existence of three unique ``max-exparabolas'' to a triangle. Each of these parabolas is internally tangent to one edge and the two other sides. Among all like parabolas, it is characterized by having maximal parameter. We use this result to prove a more general uniqueness statement on maximal parabolas in a convex point set. In similar spirit, we demonstrate uniqueness of minimal enclosing horocycles in hyperbolic geometry, provided the enclosed set is sufficiently small.
\end{abstract}

\begin{keyword}
  parabola parameter \sep
  osculating circle \sep
  triangle geometry \sep
  exparabola \sep
  hyperbolic geometry
  \MSC[2020]{51M16 \sep 52A40 \sep 51M04 \sep 51M09      }
\end{keyword}

\end{frontmatter}

\section{Introduction}
\label{sec:introduction}

The ellipsoid of minimal volume circumscribing a compact point set and the ellipsoid of maximal volume inscribed into a compact convex set are important objects in convex geometry \cite{gruber11}. Existence of minimal and maximal ellipsoids follows from straightforward compactness arguments. First proofs of uniqueness are due to \cite{behrend38} in dimension two and to \cite{danzer57} for general dimension. Sometimes, minimal and maximal ellipsoids are also named after Charles Loewner (Karel Löwner) who is believed to have found the first proof of uniqueness for the minimal ellipsoid but did not publish it \cite{busemann50,henk12} and Fritz John who characterized the maximal ellipsoid and showed some inequalities related to it, cf. the original \cite{john48} and the re-print~\cite{john13}.

The Loewner-John ellipsoids are not just of theoretical importance but have also found application in such diverse areas as optimization \cite[Chapter~3]{groetschel93}, robotics \cite{dabbene17}, or financial mathematics \cite{shen15}. Hence, also efficient methods for their computation are of interest, for example \cite{albocher09,hayes19,gartsman24,ma24}.

One important property of Loewner-John ellipsoids is their \emph{affine invariance.} It accounts in parts for their theoretical and practical importance and also greatly simplifies proofs of uniqueness. Uniqueness results for minimal circumscribing and maximal inscribed ellipsoids with respect to different functions to measure their size are typically harder to obtain, cf. \cite{gruber08,schroecker08,weber10} for quermassintegrals and more general size functions that satisfy certain convexity properties.

A typical technique for proving uniqueness is indirect. Assuming existence of two different extremal (minimal or maximal) ellipsoids of equal size, existence of a further circumscribing/inscribing ellipsoid that is smaller/larger is shown \cite{danzer57,schroecker08,weber10}. Uniqueness theorems have also been proved for minimal hyperbolas that enclose a set of lines in the plane \cite{schroecker07} and for minimal enclosing ellipses in the elliptic and in the hyperbolic plane \cite{weber12a,weber12b}. These proofs are even more difficult and guarantee uniqueness only under some additional assumptions on the set to be enclosed and the set of conics to minimize over.

In this article we study uniqueness results for classes of conics that have not been considered so far: Parabolas in the Euclidean plane and horocycles in the hyperbolic plane. Both have infinite area and arc length so that typical ways to measure their size fail. However, any two parabolas or horocycles (in the Cayley-Klein model of hyperbolic geometry) can obviously be ``compared in size'' in a Euclidean sense: Unless they are congruent, one can be transformed into the interior of the other by a Euclidean rotation or translation. Our notion of minimal parabolas and maximal horocycles will be based on this concept of size comparison.

Classes of congruent parabolas or horocycles are determined by only one invariant (the \emph{parameter} in case of parabolas), much like the spheres whose size is uniquely determined by the radius. This property is very beneficial in proofs of uniqueness and makes the study of different size functions unnecessary. It should be noted that parabolas and horocycles are unbounded but with only one point at infinity. Hence, unlike ellipsoids, they can be used for conservative (worst case) approximations of certain unbounded point sets.

We continue this text by setting notation and introducing some basic concepts in Section~\ref{sec:preliminaries} before turning to maximal parabolas in Section~\ref{sec:maximal-parabolas}. Our first result (Theorem~\ref{th:exparabolas}) shows that to any triangle three locally maximal parabolas can be associated. These \emph{max-exparabolas} seem to be an interesting new concept of triangle geometry. The uniqueness result of maximal parabolas in more general settings is then dealt with in Theorem~\ref{th:maximal-parabola}. Section~\ref{sec:minimal-horocycles} is dedicated to the study of minimal enclosing horocycles. The main result, Theorem~\ref{th:minimal-horocycle}, claims their uniqueness among all enclosing horocycles whose axis ratio (when viewed as a Euclidean ellipse) does not drop beneath~$\sqrt{2}$.

\section{Preliminaries}
\label{sec:preliminaries}

In this section we collect some known facts about conics in the projective, Euclidean, and hyperbolic plane. We refer the reader to \cite{casas-alvero14}, \cite{glaeser24} for conics in general and \cite[Chapters~25-26]{richter-gebert11} for conics in the hyperbolic plane.

We represent a conic in the real projective plane $\mathbb{P}^2(\mathbb{R})$ by a real symmetric matrix $C \neq 0$ of dimension three by three, keeping in mind that
\begin{itemize}
\item non-zero real multiples of $C$ represent the same conic,
\item the zero matrix does not represent a conic, and
\item real conics represented in this way need not contain real points.
\end{itemize}
The conic $C$ is called \emph{regular} if it can be represented by an invertible matrix.

The \emph{polarity} in a regular conic $C$ is the map that sends a point with projective coordinates $p$ to the line with projective coordinates $C \cdot p$ -- the \emph{polar} of $p$ -- and the line with projective coordinates $u$ to the point $C^{-1} \cdot u$ -- the \emph{pole} of $u$.

A point $p$ is said to be contained in $C$ if $p^t \cdot C \cdot p = 0$. Similarly, a line $u$ is said to be tangent to $C$ if $u^t \cdot C^{-1} \cdot u = 0$. One can identify a regular conic with real points with the set of its points but also with the sets of its tangents. The latter is sometimes referred to as \emph{dual conic.} It is a conic in the dual projective plane where it can be described by the matrix~$C^{-1}$.

\subsection{The Common Interior of Two Conics}
\label{sec:interior}

The \emph{interior $\interior C$} of a regular conic $C$ is the set of all points which are not incident with any tangent of $C$. By $\cinterior C$ we denote its closure in an ambient Euclidean or hyperbolic topology, depending on context. Once $C$ is suitably normalized, we have $\interior C = \{p \in \mathbb{P}^2(\mathbb{R}) \mid p^t \cdot C \cdot p < 0\}$. Assume now that two conics $C_0$, $C_1$ are normalized in that way and, for $t \in [0,1]$, define $C(t) \coloneqq (1-t)C_0 + tC_1$. Then
\begin{equation*}
  p^t \cdot C(t) \cdot p =
  (1-t)p^t \cdot C_0 \cdot p + t p^t \cdot C_1 \cdot p\end{equation*}
so that \emph{interior points of both $C_0$ and $C_1$ are also interior points of~$C(t)$.}

Similarly, we can also consider convex combinations  $D(t) \coloneqq (1-t)D_0 + tD_1$ of dual conics. Provided $D_0$ and $D_1$ are suitably normalized, this yields a dual pencil of conics with the property that every line neither intersecting $D_0$ nor $D_1$ will also not intersect $D(t)$. For the respective primal conics $C_0$, $C_1$, and $C(t)$ this means
\[
    \cinterior C(t) \subset \conv \cinterior C_0 \cup \cinterior C_1.
\]
Here ``$\conv$'' denotes the convex hull of a point set.

Constructions of this type are rather typical in the context of minimal or
maximal conics and quadrics \cite{danzer57,weber10} and we will encounter them as well in our proof of Theorem~\ref{th:maximal-parabola}. Note that, in order to ensure existence of minimal or maximal conics, it is necessary to allow for points to be contained in the \emph{closure} of the primal or dual interior. This will also be our understanding throughout this text.

\subsection{Parabolas and their Size}
\label{sec:parabolas}

Designating one line $\ell_\infty \subset \mathbb{P}^2(\mathbb{R})$ as \emph{line at infinity,} an affine structure is imposed on $\mathbb{P}^2(\mathbb{R})$. Depending on the number of intersection points with $\ell_\infty$, three types of regular conics can be distinguished: Ellipses (zero intersection points), hyperbolas (two intersection points) and parabolas (one intersection point). Here, we focus on the latter.

The condition for the conic $P = (p_{ij})_{i,j=0,1,2}$ to be a parabola is that $\ell_\infty$ is one of its tangents. Using projective coordinates $[x_0,x_1,x_2]$ where $x_0 = 0$ is the equation of $\ell_\infty$, this can be stated as
\begin{equation*}
  (1,0,0) \cdot P^{-1} \cdot (1,0,0)^t = 0
\end{equation*}
or, in terms of the coefficients of $P$, as
\begin{equation*}
  p_{11}p_{22}-p_{12}^2 = 0.
\end{equation*}

While in affine geometry, any two parabolas are equivalent, this is not the case in Euclidean geometry. In fact, any two parabolas correspond in a Euclidean transformation plus a uniform scaling. Thus, the ``size'' of a parabola can be measured as scaling factor relative to a designated ``unit parabola.'' Another way of embodying the same idea uses the parabola's \emph{parameter $p$} (the distance between focal point and directrix \cite[Chapter~2]{glaeser24}) for the purpose of measuring and comparing size. A parabola $P_1$ with parameter $p_1$ can be mapped into the interior of a parabola $P_2$ with parameter $p_2$ by a Euclidean displacement, if and only if $p_1 < p_2$. As to the respective size of $P_1$ and $P_2$ this means that
\[
    \size{P_1} < \size{P_2} \iff p_1 < p_2.
\]

\begin{remark}
  \label{rem:euclidean}
  Using the parameter as measure for the size of a parabola actually mixes concepts of affine and of Euclidean geometry. Consequently, the enclosing parabola of minimal size is not an object of affine geometry.
\end{remark}

In terms of coefficients of the parabola matrix $P$ the parameter $p$ equals
\begin{equation}
  \label{eq:parameter}
  p = \frac{\vert{p_{01}p_{12}-p_{02}p_{11}\vert}}{(p_{11}+p_{22})(p_{11}^2+p_{12}^2)^{1/2}}.
\end{equation}
We will often measure a parabola's size by the squared parameter in order to avoid absolute value in the numerator and the square root in the denominator. In this way we can profit from the algebraicity of the resulting expressions.

\subsection{Horocycles and their Size}
\label{sec:horocycles}

We now consider the Cayley-Klein model of planar hyperbolic geometry. It is embedded in the real projective plane. We use both, homogeneous coordinates $[x_0,x_1,x_2]$ and Cartesian coordinates $(x,y)$ which are related by
\begin{equation*}
  x = \frac{x_1}{x_0},\quad
  y = \frac{x_2}{x_0}.
\end{equation*}
The points of hyperbolic geometry are the interior points of the conic $\mathcal{N}\colon x^2 + y^2 - 1 = 0$. Points of $\mathcal{N}$ are called \emph{ideal points} or \emph{points at infinity.}

Horocycles are defined as regular projective conics with all but one point in the interior of $\mathcal{N}$. In other words, they are Euclidean ellipses that hyperosculate $\mathcal{N}$ in a minor vertex, cf. Figure~\ref{fig:horocycle-proof} for some examples. Both, parabolas in Euclidean geometry and horocycles in hyperbolic geometry share the property of having precisely one point at infinity, so that, arguably, horocycles can be regarded as the ``parabolas of hyperbolic geometry''. This point of view ties together the different results we prove in this paper. It should be noted, however, that other concepts of a parabola in hyperbolic geometry exists, cf. \cite{alkhaldi13,alkhaldi16}.

In order to characterize the equations of a horocycle and to develop a reasonable way to measure its size, we recall that the curvature of the ellipse with equation
\begin{equation*}
  \frac{x^2}{a^2} + \frac{y^2}{b^2} = 1
\end{equation*}
in its minor vertex $(0,b)$ equals $\varkappa = \frac{b}{a^2}$. Thus, if the ellipse is to be a horocycle of the hyperbolic plane with absolute conic $\mathcal{N}$, its semi-major axis $a$ and semi-minor axis $b$ need to be related by $a^2 = b$.

The hyperbolic area of a horocycle is infinite. But, just as in the case of parabolas in Euclidean geometry, we say that horocycle $h_1$ is smaller than horocycle $h_2$ if ``$h_1$ one fits into $h_2$.'' Since any two horocycles correspond in a transformation of hyperbolic geometry (a projective transformation that fixed $\mathcal{N}$ and its interior), the need for this comparison of two horocycles suggests to restrict the transformation group to Euclidean symmetries of $\mathcal{N}$. We already did something similar for parabolas, cf. Remark~\ref{rem:euclidean}. With this notion of ``size comparison'' we can reasonably measure the size of a horocycle $H$ by the function $\size H = a$ with $a$ being the (Euclidean) major axis length $a$.

\begin{remark}
  Section~\ref{sec:parabolas} actually suggests to use the horocycle's parameter (when viewed as ellipse of the Euclidean plane) to measure its size. This is consistent with our approach: The parameter of a conic equals the curvature radius on the conic's principal axis \cite[Theorem~2.1.5]{glaeser24}. For horocycles this value is $a^3$ which is a strictly monotone function of~$a$.
\end{remark}

\section{Maximal Parabolas}
\label{sec:maximal-parabolas}

The purpose of this section is uniqueness statements for maximal parabolas enclosed in a convex point set $\mathbb{F}$ of the Euclidean plane. We first consider the case when $\mathbb{F}$ is bounded by three lines. This case is interesting in its own right but will also be used to prove a more general statement in Section~\ref{sec:uniqueness-maximal-parabolas}.

\subsection{Max-Exparabolas of a Triangle}
\label{sec:exparabolas}

We consider a triangle $\triangle ABC$ with vertices $A$, $B$, $C$ and sides $a$, $b$, $c$ in the Euclidean plane. The triangle's \emph{positive half-planes} are the closed half-planes bounded by $a$, $b$, and $c$, respectively, that contain the third triangle vertex in their interior. The negative half-planes are the complementary planes with the same boundary.

It is well-known that there exist four circles tangent to all three sides of $\triangle ABC$. One of them, the \emph{incircle,} is contained in the intersection of the triangle's positive half planes, while three, the \emph{excircles,} are contained in the intersection of one negative and two positive half-planes. Clearly, incircle and excircles are of maximal size among all circles contained in the respective intersection of three triangle half-planes and tangent to the three triangle sides. This motivates the next definition.

\begin{definition}
  \label{def:exparabolas}
  A parabola $P$ is called an \emph{exparabola} of a triangle $\triangle ABC$ if it is tangent to all three triangle sides. An exparabola is called a \emph{max-exparabola} of $\triangle ABC$ if it is of maximal size (parameter) among all exparabolas contained in the intersection of one negative and two positive half-planes of the triangle.
\end{definition}

\begin{figure}
  \centering
  \includegraphics{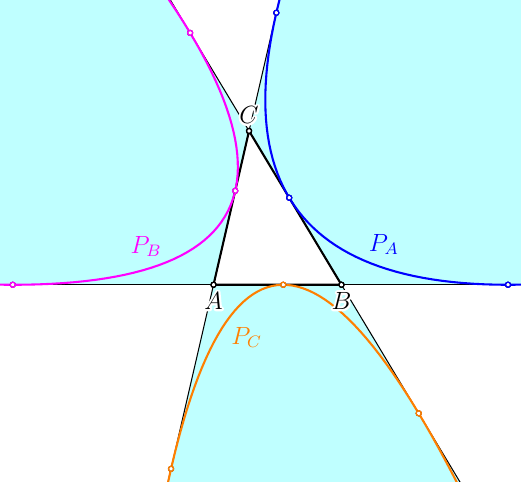}
  \caption{Triangle with three max-exparabolas $P_A$, $P_B$, and $P_C$.}
  \label{fig:exparabolas}
\end{figure}

Figure~\ref{fig:exparabolas} displays a triangle $\triangle ABC$ together with its three max-exparabolas. Existence of precisely three max-exparabolas is the content of the next theorem.

\begin{theorem}
  \label{th:exparabolas}
  There exist precisely three max-exparabolas to a triangle $\triangle ABC$, one in each of the three possible intersections of one negative and two positive half-planes.
\end{theorem}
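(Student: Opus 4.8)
The plan is to reduce the statement to a transparent one‑variable extremal problem by parametrizing the parabolas tangent to the three lines $a,b,c$ by the direction of their axis. Record each side by the angle $\theta_i$ of its direction and by its signed distance $d_i$ from the origin ($i=1,2,3$ for $a,b,c$). For an axis direction $\varphi\in\mathbb{R}/\pi\mathbb{Z}$, rotate the plane so that the axis becomes vertical; a candidate parabola is then a graph $y=\kappa x^2+\beta x+\gamma$, the image of side $i$ is a line $y=m_ix+q_i$ with $m_i=-\cot(\theta_i-\varphi)$ and $q_i=d_i/\sin(\theta_i-\varphi)$, and the three tangency conditions read $(m_i-m_j)(2\beta-m_i-m_j)=4\kappa(q_i-q_j)$. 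This system is linear in $(\kappa,\beta)$ and has a unique solution for every $\varphi\not\equiv\theta_1,\theta_2,\theta_3\pmod\pi$; then $\gamma$ is forced as well. Hence there is a unique parabola $P(\varphi)$ tangent to $a,b,c$ with prescribed axis direction $\varphi$, and every parabola tangent to these three lines arises this way from its own axis direction (a parabola with axis parallel to a line cannot be tangent to that line).

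Solving the system — using $\sin((\theta_i-\varphi)-(\theta_j-\varphi))=\sin(\theta_i-\theta_j)$ to push all the $\varphi$‑dependence into the three factors $\sin(\theta_i-\varphi)$ — yields, after a short calculation,
\[
  \kappa(\varphi)=\frac{\sin(\theta_1-\theta_2)\sin(\theta_2-\theta_3)\sin(\theta_3-\theta_1)}{4K\,\sin(\theta_1-\varphi)\sin(\theta_2-\varphi)\sin(\theta_3-\varphi)},
\]
with $K\neq 0$ a constant determined by the triangle, so that the parameter of $P(\varphi)$ is
\[
  p(\varphi)=\frac{1}{2\lvert\kappa(\varphi)\rvert}=c\,\bigl\lvert\sin(\theta_1-\varphi)\sin(\theta_2-\varphi)\sin(\theta_3-\varphi)\bigr\rvert,
\]
with a constant $c>0$ depending only on $\triangle ABC$. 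In particular $P(\varphi)$ degenerates exactly when the axis direction $\varphi$ equals the direction of one of the sides (there $p\to 0$ and $P(\varphi)$ flattens onto the line through the opposite vertex parallel to that side); these three directions cut the circle of axis directions into three open arcs.

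The remaining point is to identify the three arcs with the three regions. Being convex and tangent to $a,b,c$, the parabola $P(\varphi)$ lies in one of the eight sign‑regions of the triangle; an elementary case check shows that the only sign‑regions meeting all three lines are the closed triangle and the three admissible regions of the theorem, and a parabola, being unbounded, cannot lie inside the triangle. Thus $P(\varphi)$ always lies in one admissible region; since any two admissible regions are disjoint apart from a single triangle vertex while $P(\varphi)$ depends continuously on $\varphi$ and is never a point, the region containing $P(\varphi)$ is constant on each open arc. Inspecting the flat limiting figure at each of the three degenerate directions shows that the two arcs adjacent to it belong to two different regions, so the three arcs realize the three admissible regions, each exactly once. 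Finally, on the arc belonging to one region $p(\varphi)=c\lvert g(\varphi)\rvert$ with $g(\varphi)=\prod_i\sin(\theta_i-\varphi)$ vanishes at the two (degenerate) endpoints and is positive in the interior, so it attains a positive maximum at an interior critical point; and there $\frac{d}{d\varphi}\log\lvert g\rvert=\sum_i\cot(\varphi-\theta_i)$, a sum of strictly decreasing functions running from $+\infty$ to $-\infty$ across the open arc, has exactly one zero. So $p$ has a unique maximizer on each arc, i.e. a unique exparabola in each region, which gives precisely three exparabolas in all.

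The substantive work is the calculation of $\kappa(\varphi)$ and, above all, the bookkeeping of the last step: showing that the admissible axis directions for each region form precisely one of the three arcs — so that its endpoints are degenerate, non‑maximal parabolas and the maximum lies in the interior — and that all three regions are actually attained. Once the closed form for $p(\varphi)$ is in hand, the unimodality argument via the monotonicity of $\sum_i\cot(\varphi-\theta_i)$ is immediate, so this matching of arcs to regions is the real obstacle.
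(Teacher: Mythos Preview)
Your argument is correct and takes a genuinely different route from the paper. The paper fixes coordinates, parametrizes the inscribed parabolas by the point of tangency $(\lambda,0)$ on one side, and obtains the squared parameter as an explicit rational function $p^2(\lambda)$; its derivative has a cubic factor $E(\lambda)$, and the sign change $E(a_1)E(b_1)<0$ together with a symmetry count shows exactly one root in each of the three $\lambda$-intervals. You instead parametrize by the axis direction $\varphi$, which produces the clean closed form $p(\varphi)=c\,\lvert\prod_i\sin(\theta_i-\varphi)\rvert$ and lets you read off unimodality on each arc from the strict monotonicity of $\sum_i\cot(\varphi-\theta_i)$.

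What each approach buys: your formula is more transparent and the uniqueness-per-arc argument is immediate, with no appeal to symmetry. On the other hand, the paper's parametrization makes the region identification trivial (the tangent point lies between the two vertices of a side precisely when the parabola sits in the corresponding exterior region), whereas you have to work for it via the limiting double lines and a continuity argument---which you correctly flag as the real bookkeeping obstacle. There is also a structural reason for the paper's choice: its $\lambda$ is, up to reparametrization, the parameter of the \emph{dual linear pencil} through the four common tangents, and the pseudo-concavity of $p^2$ along that pencil is exactly what is invoked in the subsequent proof of the general uniqueness theorem for maximal parabolas in a convex set. Your $\varphi$-parametrization does not sit linearly in the dual pencil, so plugging your version of the exparabola theorem into that later argument would require an extra change of variables.

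Two places where a reader might want one more sentence from you: the derivation of the product formula for $\kappa(\varphi)$ (you call it ``a short calculation''; it is, but showing at least the $2\times 2$ determinant would make it self-contained), and the claim that adjacent arcs land in different regions. For the latter, the cleanest justification is that as $\varphi$ crosses $\theta_i$ the factor $\sin(\theta_i-\varphi)$ changes sign, so $\kappa$ changes sign and the parabola's opening along the limiting line through the opposite vertex flips from one side to the other---hence from one of the two regions meeting at that vertex to the other. With that made explicit, the cyclic constraint (three arcs, consecutive ones distinct) forces all three regions to be realised.
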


\begin{proof}
  We assign coordinates
  \[
    A = (a_1,0),\quad
    B = (b_1,0),\quad
    C = (0, c_2)
  \]
  to the triangle's vertices and parametrize the pencil $D(\lambda)$ of dual conics that contain the the line at infinity and the three triangle sides with respective equations
  \begin{equation}
    \label{eq:lines}
    c_2x + b_1y = b_1c_2,\quad
    c_2x + a_1y = a_1c_2,\quad
    y = 0.
  \end{equation}
  This pencil is given by
  \[
    D(\lambda) =
    \begin{bmatrix}
      0 & \lambda - a_1 - b_1 & c_2 \\
      \lambda - a_1 - b_1 & -2a_1b_1 & c_2\lambda \\
      c_2 & c_2\lambda & 0
    \end{bmatrix},
    \quad \lambda \in \mathbb{R}.
  \]
  The dual to the dual conic $D(\lambda)$ is the parabola
  \[
    P(\lambda) =
    \begin{bmatrix}
      -c_2\lambda^2 & c_2\lambda & \lambda^2 - (a_1+b_1)\lambda + 2a_1b_1 \\
      c_2\lambda & -c_2 & \lambda - a_1 - b_1 \\
      \lambda^2 - (a_1+b_1)\lambda + 2a_1b_1 & \lambda - a_1 - b_1 & -c_2^{-1}(\lambda - a_1 - b_1)^2
    \end{bmatrix}.
  \]
  For varying $\lambda$, $P(\lambda)$ describes all parabolas tangent to the lines \eqref{eq:lines} and it can readily be confirmed that the point of tangency with the triangle side $A \vee B$ is given by $(\lambda,0)$.

   By \eqref{eq:parameter}, the squared parameter of $P(\lambda)$ equals
  \begin{equation}
    \label{eq:squared-parameter}
    p^2(\lambda) =
    \frac{4c_2^4(b_1-\lambda)^2(a_1-\lambda)^2}
    {(a_1^2+2a_1b_1-2a_1\lambda+b_1^2-2b_1\lambda+c_2^2+\lambda^2)^3}.
  \end{equation}
  It vanishes for $\lambda \in \{a_1, b_1\}$. These values give rise to two singular parabolas and a third singular parabola is obtained in the limit for $\lambda \to \infty$. The singular parabolas are double lines on the edges of the anticomplementary triangle.

  In order to find local extrema of $p^2(\lambda)$ we compute its derivative:
  \[
    \od{p^2}{\lambda}(\lambda) =
    \frac{8c_2^4(b_1-\lambda)(a_1-\lambda)E(\lambda)}
         {(a_1^2+2a_1b_1-2a_1\lambda+b_1^2-2b_1\lambda+ c_2^2+\lambda^2)^4}
  \]
  where
  \begin{equation}
    \label{eq:cubic}
    E(\lambda) =
    \lambda^3
    -(a_1+b_1)\lambda^2
    +(-a_1^2 + a_1b_1 - b_1^2 - 2c_2^2)\lambda
    +a_1(a_1^2+c_2^2)+b_1(b_1^2+c_2^2).
  \end{equation}
  The up to three roots of $E$ provide us with up to three maximal parabolas inscribed into the triangle $\triangle ABC$. Because of
  \[
    E(a_1)E(b_1) = -(b_1^2+c_2^2)(a_1-b_1)^2(a_1^2+c_2^2) < 0
  \]
  we see that at least one root lies between $a_1$ and $b_1$, that is, at least one of these parabolas is contained in the intersection of the negative half-space through $A$ and $B$ and the positive half-spaces through the remaining triangle sides. For reasons of symmetry, exactly one parabola has this property and the other two parabolas are contained, respectively, in the other negative half spaces.
\end{proof}

\subsection{Uniqueness of Maximal Parabolas}
\label{sec:uniqueness-maximal-parabolas}

Now we look at the uniqueness problem of maximal parabolas in a more general setting.

\begin{theorem}
  \label{th:maximal-parabola}
  If a convex set $\mathbb{F} \subset \mathbb{R}^2$ contains a parabola of maximal size, this parabola is unique.
\end{theorem}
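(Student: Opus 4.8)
The plan is to argue indirectly in the spirit of the uniqueness proofs for minimal circumscribing spheres and ellipsoids sketched in the introduction. Suppose $\mathbb{F}$ contains two distinct parabolas $P_0$ and $P_1$ of equal maximal size, i.e.\ with equal parameter $p$. Since $P_0 \neq P_1$ but they have the same parameter, they are not congruent by a translation alone; however, being parabolas they share the structural feature of a single point at infinity. The first case to dispatch is when $P_0$ and $P_1$ have the \emph{same} point at infinity (equivalently, the same directrix direction). Then they are congruent parabolas that are translates of one another, and a short direct argument — analogous to covering the common interior of two congruent spheres by a smaller sphere — shows that $\conv(\cinterior P_0 \cup \cinterior P_1)$ contains a parabola of strictly larger parameter with the same axis direction, contradicting maximality. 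Here one uses that $\mathbb{F}$ is convex, so it contains this convex hull.

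The harder case is when $P_0$ and $P_1$ have \emph{different} points at infinity. Here I would pass to the dual picture, exactly as set up in Section~\ref{sec:interior}: normalize the dual conics $D_0 = P_0^{-1}$ and $D_1 = P_1^{-1}$ appropriately and form the dual pencil $D(t) = (1-t)D_0 + tD_1$, $t \in [0,1]$. By the observation recorded there, every line missing both $\cinterior P_0$ and $\cinterior P_1$ also misses $\cinterior C(t)$, whence $\cinterior C(t) \subset \conv(\cinterior P_0 \cup \cinterior P_1) \subset \mathbb{F}$ for all $t$. The key point is that all three triangle sides — no, rather: the three common tangent lines of $P_0$ and $P_1$ (two from the ``sides'' plus the line at infinity, which is tangent to both since both are parabolas) — are tangent to every $C(t)$. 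In particular $\ell_\infty$ is tangent to every member of the pencil, so the entire pencil consists of parabolas. This reduces the general situation to a one-parameter family of parabolas all inscribed in the (possibly degenerate) triangle formed by the common tangents, and the squared-parameter function along this pencil is precisely a rational function of $t$ of the form analyzed in the proof of Theorem~\ref{th:exparabolas}; compare~\eqref{eq:squared-parameter}.

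The decisive step is then a convexity/strict-concavity analysis of $t \mapsto p^2(C(t))$, or of a suitable monotone transform of it, on $[0,1]$. If one can show that this function attains no interior maximum that is $\le \min(p^2(P_0), p^2(P_1))$ — concretely, that it cannot be constant and that any critical point in $(0,1)$ is a strict minimum, or more simply that the function strictly exceeds $p^2(P_0) = p^2(P_1)$ somewhere in $(0,1)$ — then some $C(t_0)$ with $t_0 \in (0,1)$ is a parabola inside $\mathbb{F}$ of strictly larger size, the desired contradiction. To see this I would exploit the explicit rational form: after the common-tangent reduction, $p^2$ along the pencil is of the shape $c \cdot (\alpha-t)^2(\beta-t)^2 / q(t)^3$ with $q$ a positive quadratic and $\alpha,\beta$ the parameter-values where the pencil degenerates to double lines (the ``anticomplementary'' edges). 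Since $p^2(P_0)$ and $p^2(P_1)$ are \emph{equal}, a value $p^2(t)$ that never rose above this common level would force, via the factored numerator and the sign of $E(\lambda)$ in~\eqref{eq:cubic}, the endpoints $t=0$ and $t=1$ to lie on the same monotone branch between consecutive critical points — but a maximum of the parameter is isolated (the cubic $E$ has simple roots generically, and the degenerate cases are handled separately), so two distinct maximizers of equal value are impossible.

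I expect the main obstacle to be the common-tangent reduction in the case of \emph{different} points at infinity combined with possibly fewer than two finite common tangents: $P_0$ and $P_1$ might have zero, one, or two real common tangents besides $\ell_\infty$, and the reduction to the triangle picture of Theorem~\ref{th:exparabolas} is cleanest only when there are two. The degenerate sub-cases (parallel common tangents, coincident tangents, or a common tangent at infinity forcing a shared point at infinity, which loops back to the easy case) will each need a short separate treatment, likely by a limiting argument or by directly exhibiting a larger parabola in $\conv(\cinterior P_0 \cup \cinterior P_1)$ without invoking the pencil at all.
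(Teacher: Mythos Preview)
Your proposal follows essentially the same strategy as the paper: form the dual pencil $D(t)=(1-t)D_0+tD_1$, observe that every member is a parabola contained in $\mathbb{F}$, reduce the generic case to the exparabola analysis of Theorem~\ref{th:exparabolas} via the squared-parameter function~\eqref{eq:squared-parameter}, and treat the degenerate tangency configurations by ad hoc translation/rotation/scaling arguments inside $\conv(\cinterior P_0\cup\cinterior P_1)$. One correction to make before writing this up: two conics have \emph{four} common tangents, so two parabolas share $\ell_\infty$ plus generically \emph{three} (not two) finite common tangents, and it is exactly these three lines that form the triangle to which Theorem~\ref{th:exparabolas} applies.
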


\begin{proof}
  We assume existence of two parabolas $P_0$ and $P_1$ of the same size and both contained in $\mathbb{F}$. Denote by $D_0$ and $D_1$ their duals and set $D(t) \coloneqq (1-t)D_0 + tD_1$. The conic $D(t)$ is a dual parabola and, assuming $D_0$ and $D_1$ are suitably normalized, $D^{-1}(t)$ is contained in $\mathbb{F}$ for any $t \in [0,1]$. Our aim is to show existence of $t_0 \in (0,1)$ such that $\size D(t_0) > \size D_0 = \size D_1$. This is actually not generally but generically true.

  Assume at first that $P_0$ and $P_1$ have four real common tangents. One of them is the line at infinity $\ell_\infty$ so that only three finite tangents remain. No two of them are parallel so that they can be regarded as sides of a triangle. This means that we are actually in the situation of Theorem~\ref{th:exparabolas} where we studied parabolas tangent to the side of a triangle. In particular, the function $p^2(\lambda)$ from Equation~\eqref{eq:squared-parameter}, possibly after a suitable re-parameterization, describes the squared parameter as it varies in $D(t)$. By our discussion, the fuction $p^2(\lambda)$ is pseudo-concave on intervals that separate singular parabolas in $D(t)$ so that existence of a suitable $t_0 \in (0,1)$ follows.

  The remaining cases with less than four real common tangents are not rigid enough for $P_0$ and $P_1$ to be maximal. If the two parabolas have three real tangents, one of them, let us call it $T$, has to be counted with multiplicity two. If $T = \ell_\infty$, the axes of $P_0$ and $P_1$ are parallel. We can translate either of these parabolas into the interior of $\mathbb{G} \coloneqq \conv(\cinterior P_0 \cup \cinterior P_1)$ and enlarge it by a scaling transformation, thus contradicting the assumed maximality of $P_0$ and $P_1$. If $T \neq \ell_\infty$, the argument is similar. At least one of the parabolas can be translated into the interior of $\mathbb{G}$. Since the axes are not parallel, it can be rotated a little  without leaving the interior of $\mathbb{G}$ and then enlarged by scaling. If there are less than three real tangents of $P_0$ and $P_1$, similar arguments apply.
\end{proof}

\begin{remark}
  Our formulation of Theorem~\ref{th:maximal-parabola} carefully avoids issues with possible non-existence of a maximal parabola by just assuming existence. This is not a given, by any means. As one simple example, consider the intersection $\mathbb{F}$ of two half planes. If the half-plane boundaries are parallel, no inscribed parabolas exist. If they are not parallel, $\mathbb{F}$ contains parabolas of arbitrary size.
\end{remark}

\section{Uniqueness of Minimal Horocycles}
\label{sec:minimal-horocycles}

In this section we prove a uniqueness result for enclosing horocycles of minimal size in hyperbolic geometry. Any compact point set $\mathbb{F}$ in the hyperbolic plane can be enclosed by a suitably chosen horocycle and there also exists an enclosing horocycle of minimal size. It turns out that this minimal horocycle is not generally unique but it is under a natural additional assumption on the point set~$\mathbb{F}$.

\begin{lemma}
  \label{lem:horocycles}
  Given two horocycles $H_0$ and $H_1$ of equal size $\size{H_0} = \size{H_1} = a < 2^{-1/2}$ with common interior points, there exists a horocycle $H$ with $\size{H} < \size{H_0} = \size{H_1}$ and $\interior{H_0} \cap \interior{H_1} \subset \interior{H}$.
\end{lemma}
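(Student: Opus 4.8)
The plan is to mimic the classical "two congruent spheres" argument (Figure~\ref{fig:spheres}), but adapted to horocycles, which form a one–parameter family up to Euclidean motions fixing~$\mathcal{N}$. The key observation is that a horocycle of size $a$ is, as a Euclidean ellipse, the set $\{(x,y): x^2/a^2 + (y-c)^2/a^4 = 1\}$ hyperosculating $\mathcal{N}$ at the point $(0,1)$ reached by rotating the minor vertex; in other words, once we fix the ideal point where the horocycle touches $\mathcal{N}$, its size $a$ is its only remaining degree of freedom. So I would first reduce to two horocycles $H_0$, $H_1$ touching $\mathcal{N}$ at two ideal points $I_0 \neq I_1$ (if $I_0 = I_1$ the two horocycles are nested or equal, and the smaller one already works). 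Normalize by a hyperbolic isometry so that $I_0$ and $I_1$ are placed symmetrically, say $I_0 = (\sin\varphi, -\cos\varphi)$ and $I_1 = (-\sin\varphi, -\cos\varphi)$ for some $\varphi \in (0,\pi/2]$, with the two horocycles interchanged by the reflection $x \mapsto -x$.

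Next I would produce the candidate horocycle $H$ as the member of the pencil of conics through $H_0 \cap H_1$ — more precisely, working with the dual conics, set $D(t) = (1-t)D_0 + tD_1$ for suitably normalized $D_0 = H_0^{-1}$, $D_1 = H_1^{-1}$, so that by the discussion in Section~\ref{sec:interior} we automatically have $\interior H_0 \cap \interior H_1 \subset \cinterior H(t)$ for every $t \in [0,1]$, where $H(t) = D(t)^{-1}$. By the symmetry of the configuration under $x \mapsto -x$, the conic $H(1/2)$ is itself symmetric about the $y$-axis. The task is then to check two things about $H \coloneqq H(1/2)$: that it is again a horocycle (a regular conic meeting $\mathcal{N}$ in exactly one real point with all others interior), and that its size satisfies $\size H < a$. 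For the first point, by symmetry $H(1/2)$ is an $x$-axis–symmetric conic through the four (possibly complex) intersection points of $H_0$ and $H_1$; one argues that it touches $\mathcal{N}$ at the ideal point $(0,-1)$ lying "between" $I_0$ and $I_1$ and stays inside otherwise, hence is a Euclidean ellipse hyperosculating $\mathcal{N}$ there — a horocycle. For the size estimate, one computes the curvature of $H$ at $(0,-1)$, or equivalently extracts its semi-axes, and shows the major semi-axis is strictly less than~$a$.

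The main obstacle — and the reason for the hypothesis $a < 2^{-1/2}$ — is precisely this last size computation. The convex-combination trick guarantees $\cinterior H \subset \conv(\cinterior H_0 \cup \cinterior H_1)$, but that only gives $\size H \le a$ a priori, and for the strict inequality one needs the geometry of $\mathcal{N}$ to "bend the osculating ellipse inward." Concretely I would parametrize $H_0$ and $H_1$ explicitly in the normalized position, form $H(1/2)$, read off its semi-axes $(\tilde a, \tilde b)$ as functions of $a$ and the separation angle $\varphi$, and verify the inequality $\tilde a < a$; the bound $a < 2^{-1/2}$ should emerge as the condition under which the curvature of $H$ at its vertex on the $y$-axis exceeds that of the common tangent horocycle of size $a$, uniformly in $\varphi$. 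I expect this to reduce, after clearing denominators, to a polynomial inequality in $a^2$ and (say) $t = \tan^2\varphi$ or a similar substitution, which can be checked to hold on the relevant range. A secondary technical point is the normalization of $D_0$ and $D_1$ (choosing the signs of the representing matrices) so that the convex combination really does contain the common interior and really is a dual ellipse; this is routine but must be done with care, exactly as in the remarks following Section~\ref{sec:interior}.
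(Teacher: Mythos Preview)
Your pencil approach has two genuine problems, and together they block the argument. First, the containment direction you quote for the dual pencil is backwards: by Section~\ref{sec:interior}, the dual combination $D(t)=(1-t)D_0+tD_1$ guarantees $\cinterior H(t)\subset\conv(\cinterior H_0\cup\cinterior H_1)$, which is the inclusion needed for \emph{maximal inscribed} conics, not for minimal enclosing ones. To get $\interior H_0\cap\interior H_1\subset\interior H(t)$ you would need the \emph{primal} pencil $C(t)=(1-t)H_0+tH_1$.

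Second, and more seriously, neither pencil consists of horocycles. Using the paper's normalized matrices for $H_0$ and $H_1$ and forming $C(\tfrac12)=\tfrac12(H_0+H_1)$, one finds that the value of the quadratic form at the candidate ideal point $(0,1)$ is $(1-a^2)(1-\cos\omega)^2>0$, so $C(\tfrac12)$ does not even meet~$\mathcal{N}$; it is an ellipse strictly inside the disc, not a horocycle. (The paper actually remarks in Section~\ref{sec:conculsions} that no linear dual pencil of horocycles exists, which is why the proof of Theorem~\ref{th:maximal-parabola} does not transfer.) So the step ``check that $H(1/2)$ is again a horocycle'' fails outright, and there is no nearby repair: the one-parameter family of horocycles through a fixed ideal point is not a linear pencil in either the primal or dual sense.

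The paper's proof avoids pencils entirely. It constructs $H$ by hand as the horocycle tangent to $\mathcal{N}$ at the symmetric ideal point $(0,1)$ and passing through the lower real intersection point $L=(0,\ell)$ of $H_0\cap H_1$. Because $H$ lies in no pencil with $H_0,H_1$, the inclusion $\interior H_0\cap\interior H_1\subset\interior H$ is not automatic; the paper establishes it (and the size inequality) by explicit algebra, ultimately invoking quantifier elimination. Your instinct that the bound $a<2^{-1/2}$ enters through a polynomial inequality after a half-angle substitution is correct, but the inequality in question compares $H$ to $H_0,H_1$ directly rather than arising from a pencil parameter.
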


\begin{figure}
  \centering
  \includegraphics[]{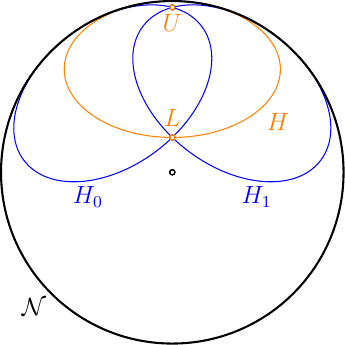}\quad \includegraphics[]{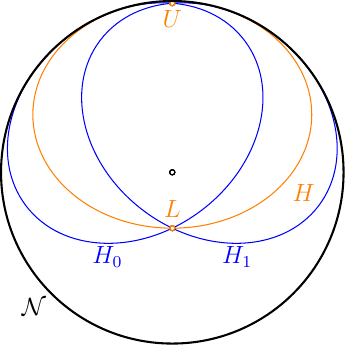}
  \caption{The horocycle $H$ contains the common interior of $H_0$ and $H_1$. In the left image, it is smaller than $H_0$ and $H_1$, in the right image, it is larger.}
  \label{fig:horocycle-proof}
\end{figure}

\begin{proof}
  Since the two horocycles are of equal size, they can be obtained from the horocycle
  \begin{equation*}
    E =
    \begin{bmatrix}
      1-2a^2 & 0 & a^2-1 \\
      0 & a^2 & 0 \\
      a^2-1 & 0 & 1
    \end{bmatrix}
  \end{equation*}
  by respective rotations around $(0,0)$ with angles $\pm\omega$ (cf. Figure~\ref{fig:horocycle-proof}):
  \begin{equation*}
    \begin{aligned}
    H_0 &=
    \begin{bmatrix}
      2(1-2a^2) &
      2(1-a^2)\sin\omega &
      -2(1-a^2)\cos\omega \\
      2(1-a^2)\sin\omega &
      2(a^2\cos^2\omega^2+\sin^2\omega) &
      -(1-a^2)\sin(2\omega) \\
      -2(1-a^2)\cos\omega &
      -(1-a^2)\sin(2\omega) &
      2(a^2\sin^2\omega+\cos^2\omega)
    \end{bmatrix},\\
    H_1 &=
    \begin{bmatrix}
      2(1-2a^2) &
      -2(1-a^2)\sin\omega &
      -2(1-a^2)\cos\omega \\
      -2(1-a^2)\sin\omega &
      2(a^2\cos^2\omega^2+\sin^2\omega) &
      (1-a^2)\sin(2\omega) \\
      -2(1-a^2)\cos\omega &
      (1-a^2)\sin(2\omega) &
      2(a^2\sin^2\omega+\cos^2\omega)
    \end{bmatrix}.
    \end{aligned}
  \end{equation*}
  Provided the horocycles $H_0$ and $H_1$ have common interior points (that is, $\vert\omega\vert$ is not too large), they intersect in the two real points $L = (0, \ell)$, $U = (0, u)$ where
  \begin{align}
    \ell &= \frac{\cos\omega-a^2\cos\omega-a\sqrt{2a^2-a^2\cos^2\omega-\sin^2\omega}}
  {a^2\sin^2\omega+\cos^2\omega}, \label{eq:l}\\
    u    &= \frac{\cos\omega-a^2\cos\omega+a\sqrt{2a^2-a^2\cos^2\omega-\sin^2\omega}}
  {a^2\sin^2\omega+\cos^2\omega}. \label{eq:u}
  \end{align}
  Obviously, $\ell < u$. We define $H$ to be the horocycle that is tangent to $\mathcal{N}$ in $(0,1)$ and contains $L$ and we will show two things:
  \begin{enumerate}
    \item $\size(H) < \size(H_0) = \size(H_1)$
    \item $X \in \interior{H_0} \cap \interior{H_1}$ $\implies$ $X \in \interior{H}$
  \end{enumerate}
  (cf. Figure~\ref{fig:horocycle-proof} which also illustrates that $\size(H) > \size(H_0) = \size(H_1)$ if $a > 2^{-1/2}$).

  \emph{Proof of 1.}
  The statement follows from comparing the (Euclidean) minor axis lengths of the (Euclidean) ellipses $H$, $H_0$, and $H_1$. The minor axis length of $H$ equals $1-\ell$ while that of $H_0$ and of $H_1$ equals $2a^2$. Thus, we need to show $2a^2 + \ell - 1 > 0$. By \eqref{eq:l} and after substituting $\omega = 2\arctan t$ this is equivalent to
  \begin{equation}
    \label{eq:size}
    a(t^2+1)\sqrt{q} < 2 - 3a^2 - a^2t^4 - 2(2a^2-1)^2t^2
  \end{equation}
  where $q = (t^4+6t^2+1)a^2-4t^2 > 0$. We claim that the right-hand side $R$ of \eqref{eq:size} is positive under the assumptions $0 < t < 1$ and $a^2 < \frac{1}{2}$. In order to see this, note that $R$ it is strictly monotone decreasing in $t$ because the coefficients of $t^4$ and of $t^2$ are non-positive and at least one of them is strictly negative. Therefore, $R > 0$ if $R$ is positive after substituting $t = 1$. Indeed, we have $R\mid_{t = 1} = 4a^2(1-2a^2) > 0$.

  Now, we may square both sides of \eqref{eq:size} to get rid of the square root and collect everything again on the left. Denoting the left-hand size of \eqref{eq:size} by $L$, this results in
  \[
    L^2 - R^2 = 4(1-a^2)(2a^2t^2+1)(4a^2t^2+(t^2-1)^2)(2a^2-1) < 0,
  \]
  which is true because $a^2 < \frac{1}{2}$ and $0 < t < 1$.

  \emph{Proof of 2.}
  Next, we consider a point $(x, y)$. It is contained in $\interior{H_0} \cap \interior{H_1}$ if and only if
  \begin{multline}
    \label{eq:int0}
    1 - 2a^2
    +(a^2x^2+y^2)\cos^2\omega
    +(a^2y^2+x^2)\sin^2\omega\\
    -2y(1-a^2)(x\sin\omega+1)\cos\omega
    +2x(1-a^2)\sin\omega < 0
  \end{multline}
  and
  \begin{multline}
    \label{eq:int1}
    1-2a^2
    +(a^2x^2+y^2)\cos^2\omega
    +(a^2y^2+x^2)\sin^2\omega\\
    +2y(1-a^2)(x\sin\omega-1)\cos\omega
    -2x(1-a^2)\sin\omega < 0.
  \end{multline}
  We need to show that \eqref{eq:int0} and \eqref{eq:int1} imply that $(x,y)$ is also contained in $\interior H$, that is,
  \begin{multline}
    \label{eq:int2}
    (x^2+2y-2)(a\sqrt{2a^2-a^2\cos^2w-\sin^2w}+(a^2-1)\cos w) \\
    +(x^2+2y^2-2y)(1+(a^2-1)\sin^2w) < 0.
  \end{multline}

  In order to make the inequalities \eqref{eq:int0} and \eqref{eq:int1} algebraic, we substitute $\omega = 2\arctan(t)$ whence \eqref{eq:int0} and \eqref{eq:int1} become
  \begin{multline}
    \label{eq:int0b}
    (4a^2t^2+t^4-2t^2+1)y^2  - 2(t^2-1)(a^2-1)(t^2+2tx+1)y \\
    +((t^2-1)^2x^2-(4t^3+4t)x-2(t^2+1)^2)a^2 + (t^2+2tx+1)^2 < 0
  \end{multline}
  and
  \begin{multline}
    \label{eq:int1b}
    (4a^2t^2+t^4-2t^2+1)y^2-2(t^2-1)(a^2-1)(t^2-2tx+1)y\\
    +((t^2-1)^2x^2+(4t^3+4t)x-2(t^2+1)^2)a^2+(t^2-2tx+1)^2 < 0
  \end{multline}
  respectively. The same substitution turns inequality ~\eqref{eq:int2} into
  \begin{multline}
    \label{eq:int2b}
    a(t^2+1)(x^2+2y-2)\sqrt{q} + ((2-x^2-2y)a^2+2x^2+2y^2-2)t^4 \\
    +4(a^2-\tfrac{1}{2})(x^2+2y^2-2y)t^2 + (x^2+2y-2)a^2+2(y-1)^2 < 0.
  \end{multline}
  Since the coefficient of $\sqrt{q}$ is negative for $x^2+y^2-1 < 0$, we may subtract $a(t^2+1)(x^2+2y-2)\sqrt{q}$ on both sides of \eqref{eq:int2b}, square, and collect everything on the left-hand side again to arrive at $-4(4a^2t^2+(t^2-1)^2)k < 0$ where
  \begin{multline}
    \label{eq:int2c}
    k =
    (x^2+y^2-1)((x^2+2y-2)a^2-x^2-y^2+1)t^4 \\
    -4(x^2+y^2-1)(y-1)^2(a^2-\tfrac{1}{2})t^2
    -(y-1)^2(y^2+(2a^2-2)y+1+(x^2-2)a^2).
  \end{multline}
  Using quantifier elimination (we employ the \texttt{Reduce} command of Mathematica 14.2 by Wolfram Research Inc.), we can show that this is indeed true provided \eqref{eq:int0b} and \eqref{eq:int1b} hold in addition to
  \[
      0 < a < 2^{-1/2},\quad
      0 < t < 1,\quad\text{and}\quad
      x^2 + y^2 < 1.\qedhere
  \]
\end{proof}

An immediate consequence of Lemma~\ref{lem:horocycles} is

\begin{theorem}
  \label{th:minimal-horocycle}
  Given a compact point set $\mathbb{F}$ in the hyperbolic plane that can be enclosed by a horocycle of size $a < 2^{-1/2}$, there exists a unique horocycle of minimal size that encloses~$\mathbb{F}$.
\end{theorem}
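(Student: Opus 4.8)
The plan is the standard two-step argument for results of this kind: first secure \emph{existence} of a minimal enclosing horocycle by a compactness argument, then obtain \emph{uniqueness} by the usual indirect route, feeding two hypothetical minimal horocycles into Lemma~\ref{lem:horocycles}. All of the genuine analytic work has already been done in the Lemma; what remains is essentially bookkeeping with closures and interiors.

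For existence I would parametrize the horocycles of hyperbolic geometry by the ideal point of tangency with $\mathcal{N}$ (ranging over a circle $S^1$) together with the size $a \in (0,\infty)$; each such pair determines a unique horocycle. Since $\mathbb{F}$ is compact it is contained in the open disk bounded by $\mathcal{N}$ and hence bounded away from $\mathcal{N}$, so there is $\varepsilon > 0$ with $\size H \ge \varepsilon$ for every horocycle $H$ that encloses $\mathbb{F}$. By hypothesis there is at least one enclosing horocycle of some size $a_0 < 2^{-1/2}$. The set of enclosing horocycles whose size lies in $[\varepsilon, a_0]$ is then a nonempty closed subset of the compact set $S^1 \times [\varepsilon, a_0]$, the condition ``$\mathbb{F} \subseteq \cinterior H$'' being closed, so the continuous function $\size$ attains a minimum on it. This yields a minimal enclosing horocycle $H_*$ with $a_* \coloneqq \size H_* \le a_0 < 2^{-1/2}$.

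For uniqueness, suppose $H_0 \neq H_1$ both enclose $\mathbb{F}$ and both realise the minimal size $\size H_0 = \size H_1 = a_* < 2^{-1/2}$. Then $\mathbb{F} \subseteq \cinterior H_0 \cap \cinterior H_1$, so this convex set is nonempty. I would first check that $H_0$ and $H_1$ actually have common \emph{interior} points: otherwise $\cinterior H_0 \cap \cinterior H_1$ has empty interior, and since it is the intersection of two strictly convex regions with meeting closures but disjoint interiors, a separating-line argument forces it to be a single point; that would make $\mathbb{F}$ that one point $z$, and for a point with $0 < |z| < 1$ (the centre is excluded by the size hypothesis) the minimal enclosing horocycle is visibly unique, namely the horocycle through $z$ that is tangent to $\mathcal{N}$ at the ideal point in the direction of $z$ — so there is nothing to prove in that case. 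Hence we may assume $\interior H_0 \cap \interior H_1 \neq \emptyset$, and Lemma~\ref{lem:horocycles} supplies a horocycle $H$ with $\size H < a_*$ and $\interior H_0 \cap \interior H_1 \subseteq \interior H$. Passing to closures and using that $\overline{\interior H_0 \cap \interior H_1} = \cinterior H_0 \cap \cinterior H_1$ for convex regions with overlapping interiors (every point of the right-hand side is reached along a half-open segment issuing from an interior point), we obtain $\mathbb{F} \subseteq \cinterior H_0 \cap \cinterior H_1 \subseteq \cinterior H$, so $H$ encloses $\mathbb{F}$ and is strictly smaller than $H_*$ — contradicting minimality. Therefore $H_0 = H_1$.

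The main obstacle is really a matter of care rather than depth: making sure the family over which one minimises is genuinely compact (that ``encloses $\mathbb{F}$'' is a closed condition and that size is bounded below by a positive constant), and that the Lemma's conclusion $\interior H_0 \cap \interior H_1 \subseteq \interior H$ correctly upgrades to $\mathbb{F} \subseteq \cinterior H$ for the closed set $\mathbb{F}$. The degenerate one-point case must be acknowledged but costs nothing. Everything quantitative — in particular the condition $a < 2^{-1/2}$ — is inherited directly from Lemma~\ref{lem:horocycles}.
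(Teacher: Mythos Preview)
Your proposal is correct and follows essentially the same route as the paper: uniqueness is obtained by feeding two hypothetical minimal enclosing horocycles into Lemma~\ref{lem:horocycles} and reaching a contradiction. You are in fact more careful than the paper's own argument, which asserts existence without proof and does not explicitly address the common-interior-point hypothesis of the Lemma or the passage from $\interior H$ to $\cinterior H$ that you spell out.
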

\begin{proof}
  The assumptions of the theorem guarantee existence of an enclosing horocycle of $\mathbb{F}$ with $\size(\mathbb{F}) < 2^{-1/2}$. Assume that $H_0$, $H_1$ are both enclosing horocycles of minimal size. Then $\size(H_0) = \size(H_1) < 2^{-1/2}$. But this is a contradiction to Lemma~\ref{lem:horocycles}.
\end{proof}

\begin{remark}
  It is not possible to improve the bound on $a$ in Theorem~\ref{th:minimal-horocycle}. Consider Figure~\ref{fig:horocycle-proof-2} where the situation of the proof of Lemma~\ref{lem:horocycles} is depicted for the case $a = 2^{-1/2}$. All three horocycles, $H_0$, $H_1$, and $H$ are of equal size as all of them contain the (Euclidean) center $L$ of $\mathcal{N}$. But so does any minimal enclosing horocycle of $\cinterior{H_0} \cap \cinterior{H_1}$. Therefore, $H_0$, $H_1$, $H$ and infinitely many more horocycles are minimal. Some of them are displayed in Figure~\ref{fig:horocycle-proof-2} in gray color.
\end{remark}

\begin{figure}
  \centering
  \includegraphics[]{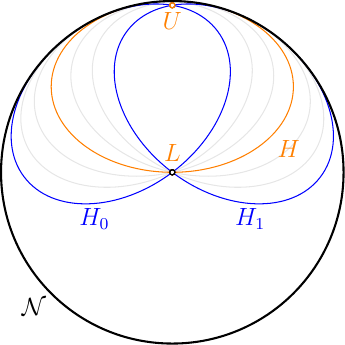}
  \caption{Point set with non-unique minimal enclosing horocycle.}
  \label{fig:horocycle-proof-2}
\end{figure}

\section{Conclusions}
\label{sec:conculsions}

We have proved uniqueness results for maximal enclosed parabolas in the Euclidean plane and of minimal circumscribing horocycles in the hyperbolic plane. These results pose the natural question for uniqueness results for minimal circumscribing parabolas and maximal enclosed horocycles. Unfortunately, neither our proof of Theorem~\ref{th:maximal-parabola} nor of Theorem~\ref{th:minimal-horocycle} can be easily adapted to the other case. The adaptation of the proof of Theorem~\ref{th:maximal-parabola} fails to the non-existence of linear dual pencils or horocycles, the adaptation of Theorem~\ref{th:minimal-horocycle} fails due to the non-uniqueness of a parabola through just one extremal point~$L$.

The result (and also the proof) of Theorem~\ref{th:exparabolas} on existence of maximal parabolas inscribed into a triangle is needed in order to prove Theorem~\ref{th:maximal-parabola}, the general uniqueness result on maximal parabolas. It is, however, of interest in its own right as it introduces the concept of max-exparabolas to triangle geometry. Their position within modern triangle geometry is yet to be clarified. This is topic of future research.

\section*{Acknowledgment}

This research is funded by the Ministry of Education and Science of Republic of North Macedonia: Bilateral Austrian-Macedonian scientific Project No.~20-8436/20 with the title ``Estimates for Ellipsoids in Classical and Non-Euclidean Geometries'' and by OeAD-GmbH Austria's Agency for Education and Internationalization, Project No.~MK05/2024, Project Title ``Estimates for Ellipsoids in Classical and Non-Euclidean Geometries''.

\bibliographystyle{elsarticle-num}

\begin{thebibliography}{10}
\expandafter\ifx\csname url\endcsname\relax
  \def\url#1{\texttt{#1}}\fi
\expandafter\ifx\csname urlprefix\endcsname\relax\def\urlprefix{URL }\fi
\expandafter\ifx\csname href\endcsname\relax
  \def\href#1#2{#2} \def\path#1{#1}\fi

\bibitem{gruber11}
P.~M. Gruber, {John} and {Loewner} ellipsoids, Discrete Comput. Geom. 46~(4)
  (2011) 776--788.
\newblock \href {https://doi.org/10.1007/s00454-011-9354-8}
  {\path{doi:10.1007/s00454-011-9354-8}}.

\bibitem{behrend38}
F.~Behrend, \href{http://eudml.org/doc/159971}{Über die kleinste
  umbeschriebene und die größte einbeschriebene {Ellipse} eines konvexen
  {Bereichs}}, Math. Ann. 115 (1938) 379--411.
\newline\urlprefix\url{http://eudml.org/doc/159971}

\bibitem{danzer57}
L.~Danzer, D.~Laugwitz, H.~Lenz, Über das {Löwnersche Ellipsoid} und sein
  {Analogon} unter den einem {Eikörper} einbeschriebenen {Ellipsoiden}, Arch.
  Math. (Basel) 8~(3) (1957) 214--219.
\newblock \href {https://doi.org/10.1007/bf01899996}
  {\path{doi:10.1007/bf01899996}}.

\bibitem{busemann50}
H.~Busemann, The foundations of {Minkowskian} geometry, Comment. Math. Helv.
  24~(1) (1950) 156--187.
\newblock \href {https://doi.org/10.1007/bf02567031}
  {\path{doi:10.1007/bf02567031}}.

\bibitem{henk12}
M.~Henk, Löwner-John ellipsoids, EMS Press, 2012, pp. 95--106.
\newblock \href {https://doi.org/10.4171/dms/6/15}
  {\path{doi:10.4171/dms/6/15}}.

\bibitem{john48}
F.~John, Studies and Essays Presented to {R. Courant} on his 60th Birthday,
  Interscience Publishers, Inc., New York, 1948, Ch. Extremum Problems with
  Inequalities as Subsidiary Conditions., pp. 187--204.

\bibitem{john13}
F.~John, Extremum Problems with Inequalities as Subsidiary Conditions, Springer
  Basel, 2013, pp. 197--215.
\newblock \href {https://doi.org/10.1007/978-3-0348-0439-4_9}
  {\path{doi:10.1007/978-3-0348-0439-4_9}}.

\bibitem{groetschel93}
M.~Grötschel, L.~Lovász, A.~Schrijver, Geometric Algorithms and Combinatorial
  Optimization, Algorithms and Combinatorics, Springer Berlin Heidelberg, 1993.
\newblock \href {https://doi.org/10.1007/978-3-642-78240-4}
  {\path{doi:10.1007/978-3-642-78240-4}}.

\bibitem{dabbene17}
F.~Dabbene, D.~Henrion, C.~M. Lagoa, Simple approximations of semialgebraic
  sets and their applications to control, Automatica 78 (2017) 110--118.
\newblock \href {https://doi.org/10.1016/j.automatica.2016.11.021}
  {\path{doi:10.1016/j.automatica.2016.11.021}}.

\bibitem{shen15}
W.~Shen, J.~Wang, Transaction costs-aware portfolio optimization via fast
  {Lowner-John} ellipsoid approximation, Proc. AAAI Conf. Artif. Intell. 29~(1)
  (2015).
\newblock \href {https://doi.org/10.1609/aaai.v29i1.9453}
  {\path{doi:10.1609/aaai.v29i1.9453}}.

\bibitem{albocher09}
D.~Albocher, G.~Elber, On the computation of the minimal ellipse enclosing a
  set of planar curves, in: 2009 IEEE International Conference on Shape
  Modeling and Applications, IEEE, 2009, pp. 185--192.
\newblock \href {https://doi.org/10.1109/smi.2009.5170147}
  {\path{doi:10.1109/smi.2009.5170147}}.

\bibitem{hayes19}
M.~J.~D. Hayes, Z.~A. Copeland, P.~J. Zsombor-Murray, A.~Gfrerrer, Largest Area
  Ellipse Inscribing an Arbitrary Convex Quadrangle, Springer International
  Publishing, 2019, pp. 239--248.
\newblock \href {https://doi.org/10.1007/978-3-030-20131-9_24}
  {\path{doi:10.1007/978-3-030-20131-9_24}}.

\bibitem{gartsman24}
R.~Gartsman, N.~Linial, On the {Löwner-John} ellipsoids of the metric
  polytope, Discrete Comput. Geom. (2024).
\newblock \href {https://doi.org/10.1007/s00454-024-00703-4}
  {\path{doi:10.1007/s00454-024-00703-4}}.

\bibitem{ma24}
L.~Ma, Y.~Zhou, Construction of the ellipse with maximum area inscribed in an
  arbitrary convex quadrilateral, Comput. Aided Geom. Design 111 (2024) 102323.
\newblock \href {https://doi.org/10.1016/j.cagd.2024.102323}
  {\path{doi:10.1016/j.cagd.2024.102323}}.

\bibitem{gruber08}
P.~M. Gruber, Application of an idea of {Voronoĭ} to {John} type problems,
  Adv. Math. 218~(2) (2008) 309--351.
\newblock \href {https://doi.org/10.1016/j.aim.2007.12.005}
  {\path{doi:10.1016/j.aim.2007.12.005}}.

\bibitem{schroecker08}
H.-P. Schröcker, Uniqueness results for minimal enclosing ellipsoids, Comput.
  Aided Geom. Design 25~(9) (2008) 756--762.
\newblock \href {https://doi.org/10.1016/j.cagd.2008.07.007}
  {\path{doi:10.1016/j.cagd.2008.07.007}}.

\bibitem{weber10}
M.~J. Weber, H.-P. Schröcker, Davis' convexity theorem and extremal
  ellipsoids, Beitr. Algebra Geom. 51~(1) (2010) 263--274.

\bibitem{schroecker07}
H.-P. Schröcker, Minimal enclosing hyperbolas of line sets, Beitr. Algebra
  Geom. 48~(2) (2007) 367--381.

\bibitem{weber12a}
M.~J. Weber, H.-P. Schröcker, Minimal area conics in the elliptic plane, Adv.
  Geom. 14~(4) (2012) 665--684.
\newblock \href {https://doi.org/10.1515/advgeom-2012-0010}
  {\path{doi:10.1515/advgeom-2012-0010}}.

\bibitem{weber12b}
M.~J. Weber, H.-P. Schröcker, Minimal area ellipses in the hyperbolic plane,
  Beitr. Algebra Geom. 54~(1) (2012) 181--200.
\newblock \href {https://doi.org/10.1007/s13366-012-0112-8}
  {\path{doi:10.1007/s13366-012-0112-8}}.

\bibitem{casas-alvero14}
E.~Casas-Alvero, Analytic Projective Geometry, European Mathematical Society,
  2014.

\bibitem{glaeser24}
G.~Glaeser, H.~Stachel, B.~Odehnal, The Universe of Conics. From the ancient
  Greeks to 21st century developments, 2nd Edition, Springer Spektrum, Berlin,
  Heidelberg, 2024.
\newblock \href {https://doi.org/10.1007/978-3-662-70306-9}
  {\path{doi:10.1007/978-3-662-70306-9}}.

\bibitem{richter-gebert11}
J.~Richter-Gebert, Perspectives on Projective Geometry, Springer, Berlin,
  Heidelberg, 2011.
\newblock \href {https://doi.org/10.1007/978-3-642-17286-1}
  {\path{doi:10.1007/978-3-642-17286-1}}.

\bibitem{alkhaldi13}
A.~Alkhaldi, N.~J. Wildberger, The parabola in universal hyperbolic geometry
  {I}, KoG 17 (2013) 14--42.

\bibitem{alkhaldi16}
A.~Alkhaldi, N.~J. Wildberger, The parabola in universal hyperbolic geometry
  {II}: {Canonical} points and the $\mathcal{Y}$-conic, J. Geom. Graph. 20~(1)
  (2016) 1--11.

\end{thebibliography}

\end{document}